\newtheorem{thm}{Theorem}
\newtheorem{lem}{Lemma}
\newtheorem{quest}{Question}
\newtheorem{defn}{Definition}
\newcommand{\Z}{{\Z B}}
\let\oldenumerate\enumerate
\renewcommand{\enumerate}{
	\oldenumerate
	\setlength{\itemsep}{0pt}
	\setlength{\parskip}{0pt}
	\setlength{\parsep}{0pt}
}
\def\vertex(#1){\put(#1){\circle*{2}}}
\def\vertexo(#1){\put(#1){\circle{2}}}
\def\vert(#1){\put(#1){\circle*{1.5}}}
\def\verto(#1){\put(#1){\circle{1.5}}}
\def\lab(#1)#2{\put(#1){\makebox(0,0)[c]{#2}}}
\begin{document}

	\title{ The $1$-nearly edge independence number of a graph}
	
	\author{Zekhaya B. Shozi \thanks{Research supported by University of KwaZulu-Natal.}\\
		School of Mathematics, Statistics \& Computer Science\\
		University of KwaZulu-Natal\\
		Durban, 4000 South Africa\\
		\small \tt Email: zekhaya@aims.ac.za
	}

	\date{}
	\maketitle

	\begin{abstract}
		Let $G = (V(G), E(G))$ be a graph. The maximum cardinality of a set $M_k \subseteq E(G)$ such that $M_k$ contains exactly $k$-pairs of adjacent edges of $G$ is called the $k$-nearly edge independence number of $G$, and is denoted by $\alpha'_k(G)$. In this paper we study $\alpha_1'(G)$. In particular, we prove a tight lower (resp. upper) bound on $\alpha_1(G)$ if $G$ is a graph with given number of vertices. Furthermore, we present a characterisation of the general (resp. connected) graphs with given number of vertices and smallest $1$-nearly edge independence number. Lastly, we pose an open problem for further exploration of this study.
	\end{abstract}

	{\small \textbf{Keywords:} $1$-Nearly edge independent set; $1$-Nearly edge independence number; Faithful graph} \\
	\indent {\small \textbf{AMS subject classification:} 05C69}
	\newpage
	
	\section{Introduction}
	
	A \emph{simple} and \emph{undirected graph} $G$ is an ordered pair of sets $(V(G), E(G))$, where $V(G)$ is a nonempty set of elements called \emph{vertices} and $E(G)$ is a (possibly empty) set of $2$-element subsets of $V(G)$ called \emph{edges}. For convenience, we will often write $uv$ instead of $\{u,v\}$ to represent the edge joining the vertices $u$ and $v$ in a graph $G$. The number of vertices of a graph $G$ is the \emph{order} of $G$, while the number of edges of $G$ is the \emph{size} of $G$. 
	
	A set $M \subseteq E(G)$ is an \emph{edge independent set} (or a \emph{matching}) of $G$ if no two edges in $M$ are adjacent in $G$; that is, if no two edges in $M$ share a common vertex in $G$. The maximum cardinality of $M$ in $G$ is called the \emph{edge independence number} (or the \emph{matching number}) of $G$, and is denoted by $\alpha'(G)$. The edge independence number in graphs has been studied in the literature. See, for example, the classical books by Lov\'{a}sz and Plummer \cite{lovasz1986plummer}. Also, Plummer \cite{plummer20035} and Pulleyblank \cite{pulleyblank1995matchings} gave excellent survey articles on this study. In a series of papers \cite{furst2019uniquely, haxell2017lower, henning2012independent, henning2014induced, henning2023generalization, henning2007tight, henning2018tight}, lower bounds on the edge independence number in graphs have been studied. 
	
	In this paper we propose a generalisation of the edge independence number. Let $G = (V(E), E(G))$ be a graph. For an integer $k\ge 0$, we define a \emph{$k$-nearly edge independent set} of $G$ to be a set $M_k \subseteq E(G)$ such that $M_k$ contains exactly $k$ pairs of adjacent edges of $G$. The maximum cardinality of $M_k$ in $G$ is called the \emph{$k$-nearly edge independence number} of $G$, and is denoted by $\alpha_k'(G)$. We remark that a $0$-nearly edge independent set $M_0$ of a graph $G$ is simply a matching of $G$, and therefore $\alpha_0'(G)$ is the matching number of $G$. 
	
	In this paper we study $\alpha_1'$. For the classes of graphs that we investigated, the behavior of $\alpha_1'$ is sometimes similar and sometimes different to that of $\alpha_0'$. It is well-known that among all graphs with $n$ vertices, the edgeless graph $\overline{K_n}$ has the smallest $\alpha_0'$, since $\alpha_0'(\overline{K_n})=0$. However, the graph with $n$ vertices and smallest $\alpha_1'$ is the graph $G = tK_2 \cup (n-2t)K_1$, where $t \ge 0$ is an integer. This is interesting because the graph $\frac{n}{2}K_2$ is known to have the largest $\alpha_0'$, yet it proves itself to have the smallest $\alpha_1'$. Also, while the star $K_{1,n-1}$ is the only tree with the smallest $\alpha_0'$ among all trees with $n$ vertices, it is not the only tree with the smallest $\alpha_1'$, since the path $P_4$ also attains the smallest $\alpha_1'$.
	
	The rest of the paper is structured as follows. We begin with the preliminary section, where we provide some terminology and notation that we will adhere to throughout the paper. We also establish, in Section \ref{subsec:relation-between-alpha-1-and-alpha-1-prime}, a relationship between the $1$-nearly edge independence number and the $1$-nearly vertex independence number. Furthermore, in Section \ref{explicit-formulas} we provide explicit formulas for the $1$-nearly edge independence number of some classes of graphs. Our main results are in Section \ref{Sec:Main}, where we prove a tight lower (resp. upper) bound on the $1$-nearly edge independence number of a general graph with given order. The general graph with given order and smallest $1$-nearly edge independence number is fully characterised. We also prove, in Section \ref{subsec:lower-bound-for-connected-graphs} we prove a tight lower bound on the $1$-nearly edge independence number of a connected graph with given order. Again, the family of the connected graphs with given order and smallest $1$-nearly edge independence number is fully characterised. We conclude this paper in Section \ref{sec:open-question}, where we pose an open question for further exploration of this study.

	\section{Preliminary}
	\label{preliminary}
	
	For graph theory notation and terminology, we generally follow~\cite{henning2013total}. Let $G$ be a graph with vertex set $V(G)$, edge set $E(G)$, order $n = |V(G)|$ and size $m = |E(G)|$. We denote the degree of a vertex $v$ in $G$ by $\deg_Gv$, and we denote maximum degree of $G$ by $\Delta(G)$. We denote by $\overline{G}$  the \emph{complement} of $G$, which is defined as 
	$$\overline{G}=(V(G),\{uv \mid u,v\in V(G), u\neq v \text{ and } uv\notin E(G)\}).$$
	We define the \emph{open neighbourhood} of a vertex $v$ of a graph $G$ to be the set
	$$N_G(v) = \{ u \in V(G) \mid uv \in E(G) \}.$$

	For a subset $S$ of edges of a graph $G$, we denote by $G - S$ the graph obtained from $G$ by deleting the edges in $S$. If $S = \{e\}$, then we simply write $G - e$ rather than $G - \{e\}$.  A \emph{diamond} $D_4$ is the graph $K_4-e$, where $e$ is an arbitrary edge of $K_4$. The \emph{union} of two graphs $G$ and $H$ is given by
	$$
	G\cup H=(V(G)\cup V(H), E(G)\cup E(H)).
	$$
	For an integer $t\ge 0$, we write $tG$ to represent the union of the $t$ copies of a graph $G$.  We denote by $G[S]$ the subgraph of $G$ induced by the set $S \subseteq E(G)$.
	
	We denote the path graph, the cycle graph and the complete graph on $n$ vertices by $P_n$, $C_n$, and $K_n$, respectively. For positive integers $r$ and $s$, we denote by $K_{r,s}$ the complete bipartite graph with partite sets $X$ and $Y$ such that $|X|=r$ and $|Y|=s$. A complete bipartite graph $K_{1,n-1}$ is also called a \emph{star} in the literature.  Let $U_{1,n-1}$ be the graph obtained from the star $K_{1,n-1}$ by adding one more edge to make it a unicyclic graph.
	
	We use the standard notation $[k] = \{1,\ldots,k\}$.
	
	\subsection{Relation between $\alpha_1$ and $\alpha_1'$}
	\label{subsec:relation-between-alpha-1-and-alpha-1-prime}
	
	\begin{defn}
		\label{defn:line-graph}
		The line graph $L(G)$ of a graph $G$ is the graph with set of vertices $E(G)$, and such that two different elements  $e$ and $e'$ of $E(G)$ are adjacent in $L(G)$ if they have a common end in $G$.
	\end{defn}
	
	We now present the following lemma.
	\begin{lem}
		\label{Lem:GtoLG}
		For any graph $G$, we have 
		$\alpha_1'(G)=\alpha_1(L(G))$.
	\end{lem}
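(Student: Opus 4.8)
The plan is to exploit the fact that, by Definition~\ref{defn:line-graph}, the vertex set of $L(G)$ is literally $E(G)$, so there is a canonical identity bijection $\phi\colon E(G)\to V(L(G))$ sending each edge $e$ of $G$ to the corresponding vertex of $L(G)$. First I would record the key adjacency-preservation property: two distinct edges $e,e'\in E(G)$ are adjacent in $G$ (i.e.\ share a common end) if and only if the vertices $\phi(e),\phi(e')$ are adjacent in $L(G)$. This is immediate from the definition of the line graph and is the only structural fact needed.

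Next I would lift $\phi$ to a bijection between subsets of $E(G)$ and subsets of $V(L(G))$: for $M\subseteq E(G)$ set $S=\phi(M)\subseteq V(L(G))$. Because $\phi$ is a bijection preserving adjacency in both directions, the number of pairs of adjacent edges inside $M$ equals the number of pairs of adjacent vertices inside $S$. In particular, $M$ contains exactly one pair of adjacent edges in $G$ if and only if $S$ contains exactly one pair of adjacent vertices in $L(G)$; that is, $M$ is a $1$-nearly edge independent set of $G$ precisely when $S=\phi(M)$ is a $1$-nearly (vertex) independent set of $L(G)$.

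Finally I would observe that $\phi$ preserves cardinality, so this correspondence restricts to a cardinality-preserving bijection between the $1$-nearly edge independent sets of $G$ and the $1$-nearly independent sets of $L(G)$. Taking the maximum over each family then yields $\alpha_1'(G)=\alpha_1(L(G))$, completing the argument.

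As for the main obstacle, there is essentially no deep difficulty here, since the statement is a direct translation through the definition of the line graph. The only point requiring care is to verify that \emph{exactly} one pair of adjacent edges transfers to \emph{exactly} one pair of adjacent vertices: this is where the bijection must preserve adjacencies in \emph{both} directions, not merely map adjacent edges to adjacent vertices. Otherwise a non-adjacent pair of edges could spuriously become an adjacent pair in $L(G)$ (or vice versa) and alter the count. Confirming this two-way preservation, which holds by the \say{if and only if} clause in Definition~\ref{defn:line-graph}, is the crux of the proof.
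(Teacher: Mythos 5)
Your proof is correct, and at its core it rests on the same idea as the paper's: the definitional identification of $E(G)$ with $V(L(G))$. The execution differs, though, and yours is the tighter one. The paper's proof first describes the edge-induced subgraph of a $1$-nearly edge independent set of $G$ as $K_{1,2}\cup tK_2$, then describes the vertex-induced subgraph of a $1$-nearly independent set of $L(G)$ as $K_2\cup tK_1$, and finally infers $|M_1(G)|=|I_1(L(G))|$ from the equality $m(G)=n(L(G))$ --- an inference that, as literally written, does not follow (equality of the ground-set sizes says nothing by itself about the sizes of extremal subsets). Your argument closes exactly this gap: the two-way adjacency preservation of the canonical bijection $\phi$ means $\phi$ preserves the number of adjacent pairs inside any subset, so it restricts to a cardinality-preserving bijection between the $1$-nearly edge independent sets of $G$ and the $1$-nearly independent sets of $L(G)$, and taking maxima gives the lemma. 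You are also right that the crux is the \say{if and only if} in the definition of $L(G)$; that is precisely what the paper uses implicitly. A further small payoff of your formulation is that it generalises verbatim to $\alpha_k'(G)=\alpha_k(L(G))$ for every $k\ge 0$, whereas the paper's structural description of the induced subgraphs is specific to $k=1$.
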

	
	\begin{proof}
		Let $G$ be a graph with vertex set $V(G) = \{v_1, v_2, \ldots, v_n\}$ and edge set $E(G) = \{e_1, e_2, \ldots, e_m\}$. Let $M_1(G)$ be a $1$-nearly edge independent set of $G$, and let $G[M_1(G)]$ be the subgraph of $G$ induced by $M_1(G)$. Then, $G[M_1(G)] \cong K_{1,2} \cup tK_2$, where
		\begin{align*}
			0\le t \le \begin{cases}
				\frac{n-3}{2} & \text{ if } n \text{ is odd},\\
				\frac{n-4}{2} & \text{ if } n \text{ is even}.
			\end{cases}
		\end{align*}
		Thus, $G[M_1(G)]$ has size $t+2$. 
		
		Now let $L(G)$ be the line graph of $G$. Then, by Definition \ref{defn:line-graph}, $V(L(G)) = \{e_1, e_2, \ldots, e_m\}$. Let $I_1(L(G))$ be a $1$-nearly vertex independent set of $G$, and let $G[I_1(L(G))]$ be the subgraph of $G$ induced by $I_1(L(G))$. Then, $G[I_1(L(G))] \cong K_2 \cup tK_1$, where $0\le t \le n-2$. Thus, $G[I_1(L(G))]$ has order $t+2$. 
		
		Since, by Definition \ref{defn:line-graph}, $m(G) = n(L(G))$, we have $|M_1(G)| = |I_1(L(G))|$, and consequently $\alpha_1'(G) =  \alpha_1(L(G))$.
	\end{proof}
	
	\subsection{Explicit formulas for $\alpha_1$ of some graphs}
	\label{explicit-formulas}
	
	In this subsection we provide some explicit formulas for the $1$-nearly edge independence number of some classes of graphs. We particularly consider the classes of graphs that have well-known line graphs, such as the star $K_{1,n-1}$, the path graph $P_n$ and the cycle graph $C_n$. 
	
	The following results have been recently established \cite{shozi20241}.
	\begin{itemize}
		\item $\alpha_1(K_n) = 2,$
		\item $\alpha_1(P_n) = \left \lfloor \frac{n+2}{2} \right \rfloor$,
		\item $\alpha_1(C_n) = \left \lfloor \frac{n+1}{2} \right \rfloor$.
	\end{itemize}
	
	Thus, by Lemma \ref{Lem:GtoLG}, we also have
	\begin{itemize}
		\item $\alpha_1'(K_{1,n-1}) = \alpha_1(L(K_{1,n-1})) =\alpha_1(K_{n-1}) = 2$,
		\item $\alpha_1'(P_n) = \alpha_1(L(P_n)) = \alpha_1(P_{n-1}) = \left \lfloor \frac{n+1}{2} \right \rfloor$,
		\item $\alpha_1'(C_n) = \alpha_1(L(C_n)) = \alpha_1(C_n) = \left \lfloor \frac{n+1}{2} \right \rfloor$.
	\end{itemize}
	
	\section{Main result}
	\label{Sec:Main}
	
	In this section we present a sharp lower (resp. upper) bound on $\alpha_1'(G)$ if $G$ is a general  graph of order $n$. The general graph with given number of vertices and smallest $1$-nearly edge independence number is fully characterised. Furthermore, we present a sharp lower bound on $\alpha_1'(G)$ is $G$ is a connected graph of order $n$. The family of the connected graphs with given number of vertices and smallest $1$-nearly edge independence number is fully characterised.

	\subsection{Lower bound for general graphs}
	
	In this subsection we present a tight lower bound on the $1$-nearly edge independence number of a graph in terms of its order. Furthermore, we characterise the extremal graph with given order and smallest 1-nearly edge independence number.
	
	\begin{thm}
		\label{thm:lower-bound-general-graphs}
		If $G$ is a graph of order $n$, then $\alpha_1'(G) \ge 0$, with equality if and only if $G \cong tK_2 \cup (n-2t)K_1$ for some integer $t\ge 0$.
	\end{thm}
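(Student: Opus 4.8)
The plan is to separate the inequality from the equality case. The inequality $\alpha_1'(G)\ge 0$ is immediate, since $\alpha_1'(G)$ is the cardinality of an edge set whenever a $1$-nearly edge independent set of $G$ exists, and by convention it is taken to be $0$ when no such set exists; in either case $\alpha_1'(G)\ge 0$. The substance of the theorem is therefore the characterisation of when equality holds.

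For the direction assuming $G\cong tK_2\cup(n-2t)K_1$, I would argue directly that $G$ contains no pair of adjacent edges. In such a graph every vertex has degree at most $1$, so no two distinct edges share an endpoint. Since any $1$-nearly edge independent set must, by definition, contain exactly one pair of adjacent edges, and a single such pair requires two edges meeting at a common vertex, no set $M_1\subseteq E(G)$ can satisfy the defining property. Hence there is no valid $M_1$, and $\alpha_1'(G)=0$.

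For the converse I would prove the contrapositive: if $G\not\cong tK_2\cup(n-2t)K_1$ for every integer $t\ge 0$, then $\alpha_1'(G)>0$. A graph of order $n$ fails to be of the form $tK_2\cup(n-2t)K_1$ precisely when $\Delta(G)\ge 2$, that is, when some vertex $v$ has two distinct neighbours $u$ and $w$. Then the edges $vu$ and $vw$ are adjacent, so the set $M_1=\{vu,vw\}$ contains exactly one pair of adjacent edges and is therefore a $1$-nearly edge independent set. Consequently $\alpha_1'(G)\ge |M_1|=2>0$, completing the contrapositive and hence the necessity.

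The one point to handle with care is the convention that $\alpha_1'(G)=0$ encodes the \emph{nonexistence} of a valid $M_1$, rather than the existence of one of cardinality zero: because a single pair of adjacent edges already forces $|M_1|\ge 2$, the value of $\alpha_1'$ jumps from $0$ directly to at least $2$, and no graph satisfies $\alpha_1'(G)=1$. Once this is made explicit there is no real obstacle, as the identification of the graphs with maximum degree at most $1$ with the disjoint unions $tK_2\cup(n-2t)K_1$ is routine.
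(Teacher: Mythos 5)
Your proof is correct and takes essentially the same route as the paper: both hinge on the observation that a vertex of degree at least $2$ yields two adjacent edges forming a $1$-nearly edge independent set of size $2$, so $\alpha_1'(G)=0$ forces $\Delta(G)\le 1$, i.e.\ $G\cong tK_2\cup(n-2t)K_1$. If anything, your explicit handling of the convention that $\alpha_1'(G)=0$ records the \emph{nonexistence} of any $1$-nearly edge independent set (since any such set has cardinality at least $2$) is more careful than the paper, which simply asserts $\alpha_1'(tK_2\cup(n-2t)K_1)=0$ without comment.
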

	
	\begin{proof}
		Let $G$ be a graph of order $n$. For any integer $t\ge 0$, if $G \cong tK_2 \cup (n-2t)K_1$, then $\alpha_1'(G) =0$, thereby proving the lower bound. We now show that if $G$ is any graph of order $n$ such that $\alpha_1'(G) =0$, then  $G \cong tK_2 \cup (n-2t)K_1$. It suffices to show that $\Delta(G) \le 1$. Suppose, to the contrary, that $\Delta(G)\ge 2$. Let $v$ be a vertex of maximum degree in $G$, and let $\{v_1, v_2\} \subseteq N_G(v)$. Thus, the edges $vv_i$ for $i\in [2]$ form a $1$-nearly edge independent set in $G$, implying that
		\begin{align*}
			\alpha_1'(G) \ge |\{vv_1, vv_2\}| =2 >0,
		\end{align*}
		a contradiction. This completes the proof of \ref{thm:lower-bound-general-graphs}.  
	\end{proof}
	
	\subsection{Upper bound for general graphs}
	
	In this subsection we present a sharp upper bound on the $1$-nearly edge independence number of a graph with given order. We also characterise the family of graphs of with given order and largest $1$-nearly edge independence number.   
	
	\begin{defn}
		\label{defn:maximal-graphs}
		Let $G$ be a graph of order $n$. Then $G$ is a faithful graph if there exists a spanning forest $F$ in $G$ such  that
		\begin{align*}
			F = \begin{cases}
				P_3\cup \left( \frac{n-3}{2} \right)K_2 &\text{ if } n \text{ is odd},\\\\
				P_3\cup \left( \frac{n-4}{2} \right)K_2\cup K_1 &\text{ if } n \text{ is even}.
			\end{cases}
		\end{align*}
		Let $\mathcal{F} = \{G \mid G \text{ is a faithful graph}\}$.
	\end{defn} 
	
	It follows from the definition of a faithful graph that a spanning forest $F$ of a faithful graph $G$ has size
	\begin{align*}
		m(F) = \begin{cases}
			\frac{n+1}{2} &\text{ if } n \text{ is odd},\\\\
			\frac{n}{2} &\text{ if } n \text{ is even}.
		\end{cases}
	\end{align*}
	
	\begin{thm}
		\label{thm:upper-bound-general-graphs}
		If $G$ is a graph of order $n$, then $\alpha_1'(G) \le \left \lfloor \frac{n+1}{2} \right \rfloor$, with equality if and only if $G \in \mathcal{F}$.
	\end{thm}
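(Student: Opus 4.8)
The plan is to derive the upper bound directly from the structural description of a $1$-nearly edge independent set obtained in the proof of Lemma~\ref{Lem:GtoLG}, and then to read off the equality characterisation from that same structure. Recall that if $M_1$ is any $1$-nearly edge independent set of $G$, then $G[M_1] \cong K_{1,2} \cup tK_2$ for some integer $t \ge 0$ (here $K_{1,2}=P_3$), so $|M_1| = t+2$ and $G[M_1]$ occupies exactly $3 + 2t$ vertices of $G$. First I would prove the bound by a vertex count: since $G[M_1]$ is a subgraph of $G$ on $3+2t$ vertices we must have $3 + 2t \le n$, hence $t \le \tfrac{n-3}{2}$ and
\begin{align*}
|M_1| = t + 2 \le \frac{n-3}{2} + 2 = \frac{n+1}{2}.
\end{align*}
As $|M_1|$ is an integer this gives $|M_1| \le \left\lfloor \tfrac{n+1}{2}\right\rfloor$, and maximising over all such $M_1$ yields $\alpha_1'(G) \le \left\lfloor \tfrac{n+1}{2}\right\rfloor$.

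For the characterisation I would treat the two implications separately. For the \emph{if} direction, suppose $G \in \mathcal{F}$ with spanning forest $F$ as in Definition~\ref{defn:maximal-graphs}. The edge set $E(F)$ consists of the two adjacent edges of its $P_3$ together with the pairwise nonadjacent edges supplied by the copies of $K_2$, so $E(F)$ is a $1$-nearly edge independent set of size $m(F) = \left\lfloor \tfrac{n+1}{2}\right\rfloor$. Hence $\alpha_1'(G) \ge \left\lfloor \tfrac{n+1}{2}\right\rfloor$, and combined with the upper bound equality holds. For the \emph{only if} direction, suppose $\alpha_1'(G) = \left\lfloor \tfrac{n+1}{2}\right\rfloor$ and let $M_1$ be a maximum $1$-nearly edge independent set, so that $t + 2 = \left\lfloor \tfrac{n+1}{2}\right\rfloor$. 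When $n$ is odd this forces $t = \tfrac{n-3}{2}$, whence $G[M_1] \cong P_3 \cup \tfrac{n-3}{2}K_2$ already spans all $n$ vertices and is itself the prescribed forest; when $n$ is even it forces $t = \tfrac{n-4}{2}$, whence $G[M_1] \cong P_3 \cup \tfrac{n-4}{2}K_2$ covers $n-1$ vertices, and adjoining the single uncovered vertex as an isolated $K_1$ produces the prescribed spanning forest. In either case $G$ possesses a spanning forest of the exact form in Definition~\ref{defn:maximal-graphs}, so $G \in \mathcal{F}$.

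The upper bound and the \emph{if} direction are essentially immediate once the $K_{1,2}\cup tK_2$ structure is in hand, so the only point requiring care is the \emph{only if} direction. I expect the main (albeit mild) obstacle to be the bookkeeping showing that equality forces $t$ to attain its extreme value, and hence that $G[M_1]$ covers all but at most one vertex. The even case in particular needs the extra observation that \emph{exactly} one vertex is left uncovered, which is then recorded as the isolated $K_1$ of the forest; this is precisely what makes the characterisation align with Definition~\ref{defn:maximal-graphs} on the nose.
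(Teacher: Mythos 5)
Your proof is correct, but it is not the route the paper takes, and it is in fact more complete than the paper's own argument. For the inequality, the paper only observes that a graph containing a spanning path $P_n$ attains the value $\left\lfloor \frac{n+1}{2}\right\rfloor$; that establishes sharpness, not the bound itself, whereas your vertex count $3+2t \le n$ on $G[M_1] \cong K_{1,2}\cup tK_2$ proves the inequality for every graph. For the equality characterisation, the paper argues only by contraposition: it takes $G \notin \mathcal{F}$, lets $F_1$ be a spanning forest of $G$, and claims $\alpha_1'(G) \le m(F_1) \le \left\lfloor\frac{n+1}{2}\right\rfloor - 1$; read literally this is false (a spanning tree of a connected graph has $n-1$ edges, and $\alpha_1'$ is not bounded above by the size of an arbitrary spanning forest), and both inequalities only make sense when $F_1$ is the forest obtained from a maximum $1$-nearly edge independent set by adjoining the uncovered vertices as isolated vertices --- which is precisely the structure your \emph{only if} direction makes explicit. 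You also supply the \emph{if} direction (every faithful graph attains the bound, because the edge set of its prescribed spanning forest is itself a $1$-nearly edge independent set of the right size), which the paper verifies only for graphs containing spanning paths. In short, your approach turns the paper's sketch into an actual proof; the only addition worth making is the degenerate case where $G$ has no two adjacent edges, so that no $1$-nearly edge independent set exists, $\alpha_1'(G)=0$ by the convention implicit in Theorem \ref{thm:lower-bound-general-graphs}, and both the bound and the impossibility of equality are immediate.
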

	
	\begin{proof}
		Let $G$ be a graph of order $n$. If $n$ is odd and $G$ contains a spanning path $P_n$, then $\alpha_1'(G) = \frac{n+1}{2}$, while if $n$ is even and $G$ contains a spanning path $P_n$, then $\alpha_1'(G) = \frac{n}{2}$. This proves the upper bound.
		
		Let $G$ be a graph of order $n$ having the largest $\alpha_1(G)$. Suppose, to the contrary, that $G \notin \mathcal{F}$. Let $F_1$ be a spanning forest of $G$. Then, $F_1$ has size
		\begin{align*}
			m(F_1) \le \begin{cases}
				\frac{n+1}{2}-1 &\text{ if } n \text{ is odd},\\\\
				\frac{n}{2}-1 &\text{ if } n \text{ is even}.
			\end{cases}
		\end{align*}
		Thus, if $n$ is odd, then $\alpha_1'(G) \le m(F_1) = \frac{n+1}{2}-1 < \frac{n+1}{2}$, a contradiction. Similarly, if $n$ is even, then $\alpha_1'(G) \le m(F_1) = \frac{n}{2}-1 < \frac{n}{2}$, a contradiction.
	\end{proof}
	
	Definition \ref{defn:maximal-graphs} already provides a full characterisation of the family $\mathcal{F}$. However, using that description, it remains a challenge to imagine how the structure of an element of $\mathcal{F}$ should look like, especially when the number of vertices becomes large. It is worth noting, though, that $\{P_n, C_n, K_n\} \subset \mathcal{F}$. In Figure \ref{fig:graphs-in-the-family-F} we give just a few examples of graphs with given number of vertices and largest $1$-nearly edge independence number.

	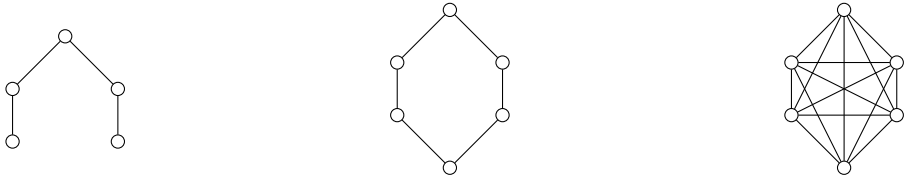
\begin{figure}[!h]
		\centering
		\begin{minipage}{0.33\textwidth}
			\centering 
			\begin{tikzpicture}[scale=0.7]
				\tikzstyle{vertexX}=[circle,draw, fill=white!90, minimum size=10pt, 
				scale=0.5, inner sep=0.2pt]
				\node (v_1) at (0,0) [vertexX, , label=left:] {};
				\node (v_2) at (1,-1) [vertexX , label=left:] {};
				\node (v_3) at (1,-2) [vertexX, , label=left:] {};
				\node (v_4) at (-1,-2) [vertexX, , label=right:] {};
				\node (v_5) at (-1,-1) [vertexX, , label=left:] {};
				
				\draw (v_4) -- (v_5);
				\draw (v_5) -- (v_1);
				\draw (v_1) -- (v_2);
				\draw (v_2) -- (v_3);
			\end{tikzpicture}
		\end{minipage}%
		\begin{minipage}{0.33\textwidth}
			\centering
			
			\begin{tikzpicture}[scale=0.7]
				\tikzstyle{vertexX}=[circle,draw, fill=white!90, minimum size=10pt, 
				scale=0.5, inner sep=0.2pt]
				\node (v_1) at (0,0) [vertexX, , label=left:] {};
				\node (v_2) at (1,-1) [vertexX , label=left:] {};
				\node (v_3) at (1,-2) [vertexX, , label=left:] {};
				\node (v_4) at (0,-3) [vertexX, , label=right:] {};
				\node (v_5) at (-1,-2) [vertexX, , label=left:] {};
				\node (v_6) at (-1,-1) [vertexX, , label=left:] {};
				
				\draw (v_4) -- (v_5);
				\draw (v_5) -- (v_6);
				\draw (v_6) -- (v_1);
				\draw (v_1) -- (v_2);
				\draw (v_2) -- (v_3);
				\draw (v_3) -- (v_4);
			\end{tikzpicture}
		\end{minipage}
		\begin{minipage}{0.33\textwidth}
			\centering
			
			\begin{tikzpicture}[scale=0.7]
				\tikzstyle{vertexX}=[circle,draw, fill=white!90, minimum size=10pt, 
				scale=0.5, inner sep=0.2pt]
				\node (v_1) at (0,0) [vertexX, , label=left:] {};
				\node (v_2) at (1,-1) [vertexX , label=left:] {};
				\node (v_3) at (1,-2) [vertexX, , label=left:] {};
				\node (v_4) at (0,-3) [vertexX, , label=right:] {};
				\node (v_5) at (-1,-2) [vertexX, , label=left:] {};
				\node (v_6) at (-1,-1) [vertexX, , label=left:] {};
				
				\draw (v_1) -- (v_2);
				\draw (v_1) -- (v_3);
				\draw (v_1) -- (v_4);
				\draw (v_1) -- (v_5);
				\draw (v_1) -- (v_6);
				\draw (v_2) -- (v_3);
				\draw (v_2) -- (v_4);
				\draw (v_2) -- (v_5);
				\draw (v_2) -- (v_6);
				\draw (v_3) -- (v_4);
				\draw (v_3) -- (v_5);
				\draw (v_3) -- (v_6);
				\draw (v_4) -- (v_5);
				\draw (v_4) -- (v_6);
				\draw (v_5) -- (v_6);
			\end{tikzpicture}
		\end{minipage}
		\caption{Some examples of graphs in the family $\mathcal{F}$.}
		\label{fig:graphs-in-the-family-F}
	\end{figure}

	\subsection{Lower bound for connected graphs}
	\label{subsec:lower-bound-for-connected-graphs}
	
	We now impose a minor restriction on the problem and consider only connected graphs with given order $n$, where $n\ge 3$. Furthermore, we provide a full characterisation of the extremal connected graphs with given number of vertices and smallest possible $1$-nearly edge independence number. First, we prove one important lemma which we shall refer to later.
	
	\begin{lem}
		\label{lem:G-contains-a-cycle-implies-that-G-has-at-most-4-vertices}
		Let $G$ be a connected graph that contains a cycle. If $\alpha_1'(G) =2$, then $G$ contains at most four vertices.
	\end{lem}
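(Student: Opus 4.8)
The plan is to work with the reformulation that $\alpha_1'(G)=2$ provides. Since $G$ contains a cycle, it has a vertex of degree at least $2$, so a pair of adjacent edges (a copy of $P_3$) always exists and $\alpha_1'(G)\ge 2$. As recorded in the proof of Lemma \ref{Lem:GtoLG}, any $1$-nearly edge independent set of size $3$ induces a $K_{1,2}\cup K_2$, that is, a vertex-disjoint copy of $P_3\cup K_2$. Hence the hypothesis $\alpha_1'(G)=2$ is equivalent to the statement that $G$ contains no vertex-disjoint $P_3\cup K_2$. I will prove the contrapositive: assuming that $G$ is connected, contains a cycle, and has $n\ge 5$ vertices, I will exhibit such a $P_3\cup K_2$, contradicting $\alpha_1'(G)=2$. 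To do this I fix a cycle $C\subseteq G$ and split the argument according to whether $|C|\ge 5$, $|C|=4$, or $|C|=3$.

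First I would dispose of the long cycles. If $|C|\ge 5$, writing the cycle as $x_1x_2\cdots x_{|C|}$, the two edges $x_1x_2,x_2x_3$ form a $P_3$ and $x_4x_5$ is an edge disjoint from it, so $\{x_1x_2,x_2x_3,x_4x_5\}$ is a $1$-nearly edge independent set of size $3$. If $|C|=4$, say $C=x_1x_2x_3x_4$, then since $n\ge 5$ and $G$ is connected there is a vertex $y\notin V(C)$ joined to $C$; after relabelling assume $y$ is adjacent to $x_1$. Now $x_2x_3,x_3x_4$ form a $P_3$ on $\{x_2,x_3,x_4\}$ and $yx_1$ is a disjoint edge, again giving a set of size $3$.

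The remaining, and most delicate, case is $|C|=3$, a triangle $x_1x_2x_3$. Here I would first use the assumption to constrain the whole graph: applying $\alpha_1'(G)=2$ to the $P_3$ formed by $x_1x_2,x_2x_3$ shows that every edge of $G$ must meet $\{x_1,x_2,x_3\}$, for otherwise a disjoint edge would complete a size-$3$ set. Consequently the $n-3\ge 2$ vertices outside the triangle form an independent set, each joined only to triangle vertices, and by connectivity each such outside vertex has at least one triangle neighbour. Choosing two distinct outside vertices $u,w$, I would then case on their attachment: if both attach to the same triangle vertex, say $x_1$, then $ux_1,x_1w$ form a $P_3$ and the opposite edge $x_2x_3$ is disjoint from $\{u,x_1,w\}$; if they attach to different vertices, say $u\sim x_1$ and $w\sim x_2$, then $x_3x_1,x_1u$ form a $P_3$ on $\{x_3,x_1,u\}$ and $wx_2$ is a disjoint edge. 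In either sub-case we obtain a $1$-nearly edge independent set of size $3$, the desired contradiction, forcing $n\le 4$.

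The only real obstacle is the triangle case: unlike the longer cycles, a triangle occupies just three vertices, so one cannot simply read off a disjoint $P_3\cup K_2$ from the cycle itself. The idea that unlocks it is to first exploit $\alpha_1'(G)=2$ to force every edge to touch the triangle, which pins down the global structure tightly enough that the two short attachment sub-cases exhaust the possibilities.
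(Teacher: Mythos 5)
Your proof is correct, and its overall skeleton is the same as the paper's: fix a cycle, split on its length ($\ge 5$, $=4$, $=3$), and in each case exhibit a vertex-disjoint $P_3\cup K_2$, i.e.\ a $1$-nearly edge independent set of size $3$; your constructions for lengths $\ge 5$ and $4$ are essentially identical to the paper's (you reach the off-cycle vertex via connectivity rather than a degree argument, and you need only an arbitrary cycle where the paper takes a largest one without ever using maximality). The genuine divergence is in the triangle case, and there your route is tighter. The paper picks an off-triangle neighbour $v_1^*$ of a triangle vertex $v_1$ and a second off-triangle vertex $v_2^*$, assumes without loss of generality that two triangle vertices have degree at least $3$, and splits on whether $v_2^*\in N_G(v_1^*)$ or $v_2^*\in N_G(v_2)$; this split is not exhaustive, since both outside vertices may be pendant to the same triangle vertex (e.g.\ $K_{1,4}$ with an extra edge joining two leaves), a configuration in which neither of the paper's subcases applies and its \emph{without loss of generality} assumption fails. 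Your preliminary observation that $\alpha_1'(G)=2$ forces every edge to meet the triangle --- so the outside vertices are independent and attach only to triangle vertices --- makes your two attachment subcases provably exhaustive, and your subcase of a common attachment vertex, which uses the opposite triangle edge $x_2x_3$ as the disjoint $K_2$, is precisely the configuration the paper's analysis overlooks. In short: same strategy, but your handling of the triangle case closes a small gap in the published case analysis at no extra cost.
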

	
	\begin{proof}
		Let $G$ be a connected graph of order $n$ that contains a cycle and satisfying $\alpha_1'(G)=2$. Suppose, to the contrary, that $n\ge 5$. Let $C_r= v_1, v_2, \ldots, v_r, v_1$ be a largest cycle of $G$. If $r\ge 5$, then the set $M_1(G) = \{v_1v_2, v_2v_3, v_4v_5\}$ forms a $1$-nearly edge independent set in $G$, implying that $\alpha_1'(G) \ge |M_1(G)| = 3>2$, a contradiction. Hence, we may assume that $r \le 4$.

		Suppose $r=4$. If $G = C_r$, then $G = C_4$. Hence, we may assume that $G\ne C_r$. Thus, $\deg_Gv_i \ge 3$ for some $i \in [r]$. Without any loss of generality, let $\deg_G v_1 \ge 3$. Since $n\ge 5$, there exists a vertex $v_1^* \in N_G(v_1)$ such that $v_1^*$ does not lie on $C_r$. However, the set $M_1(G) = \{v_1^*v_1, v_1v_2, v_3v_4\}$ forms a $1$-nearly edge independent set in $G$, implying that $\alpha_1'(G) \ge |M_1(G)| = 3>2$, a contradiction. Hence, we may assume that $r=3$.
		
		Suppose $r=3$. If $G = C_r$, then $G = C_3$.  Hence, we may assume that $G\ne C_r$. Thus, $\deg_Gv_i \ge 3$ for some $i \in [r]$.  Without any loss of generality, let $\deg_G v_i \ge 3$ for $i\in [2]$. Since $n\ge 5$, there exists at least two vertices $v_1^* \in N_G(v_1)$ and $v_2^*$ such that for each $i\in [2]$, $v_i^*$ does not lie on $C_r$. If $v_2^* \in N_G(v_1^*)$, then the set $M_1(G) = \{v_2^*v_1^*, v_1^*v_1, v_2v_3\}$ forms a $1$-nearly edge independent set in $G$, implying that $\alpha_1'(G) \ge |M_1(G)| = 3>2$, a contradiction. If $v_2^* \notin N_G(v_1^*)$, then $v_2^* \in N_G(v_2)$. However, the set $M_1(G) = \{v_1^*v_1, v_1v_3, v_2v_2^*\}$ forms a $1$-nearly edge independent set in $G$, implying that $\alpha_1'(G) \ge |M_1(G)| = 3>2$, a contradiction. This completes the proof of Lemma \ref{lem:G-contains-a-cycle-implies-that-G-has-at-most-4-vertices}.
	\end{proof}

	\begin{thm}
		\label{thm:lower-bound-for-connected-graphs}
		Let $n\ge 3$ be an integer. If $G$ is a connected graph of order $n$, then $\alpha_1'(G) \ge 2$, with equality if and only if $G \in \{C_3, C_4, D_4, K_4, U_{1,3}, P_4, K_{1,n-1}\}$.
	\end{thm}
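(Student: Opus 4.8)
The plan is to separate the lower bound from the equality characterisation and to drive everything from one structural observation. First, since $G$ is connected with $n \ge 3$, it has a vertex $v$ with $\deg_G v \ge 2$, and the two edges incident with $v$ form a copy of $K_{1,2}$, which is a $1$-nearly edge independent set of size $2$; hence $\alpha_1'(G) \ge 2$. The pivotal observation is that a set $M_1 \subseteq E(G)$ is a $1$-nearly edge independent set of size $s$ precisely when $G[M_1] \cong K_{1,2} \cup (s-2)K_2$, a graph on $2s-1$ vertices. In particular $\alpha_1'(G) \ge 3$ if and only if $G$ contains $K_{1,2}\cup K_2$ as an edge-induced subgraph, and such a subgraph already needs five vertices. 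Thus the whole problem reduces to deciding which connected graphs fail to contain $K_{1,2}\cup K_2$.

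For the sufficiency I would verify each listed graph directly. Every graph on at most four vertices cannot contain $K_{1,2}\cup K_2$, so the six connected $4$-vertex graphs $C_4, D_4, K_4, U_{1,3}, P_4, K_{1,3}$ and the $3$-vertex graph $C_3$ all have $\alpha_1'=2$ by the observation together with the lower bound. For the star $K_{1,n-1}$ with $n \ge 5$, every edge meets the centre, so no two of its edges are independent and $M_1$ can contain at most two edges; hence $\alpha_1'(K_{1,n-1}) = 2$ as well. The formulas for $\alpha_1'(C_4)$ and $\alpha_1'(P_4)$ recorded in Section \ref{explicit-formulas} corroborate these values.

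For the necessity, assume $G$ is connected with $\alpha_1'(G)=2$ and split on $n$. When $n\in\{3,4\}$ I would enumerate: the connected graphs on three vertices are $K_{1,2}$ and $C_3$, and those on four vertices are $P_4, C_4, K_{1,3}, U_{1,3}, D_4, K_4$; each appears in the stated list, noting that $K_{1,2}$ and $K_{1,3}$ are instances of $K_{1,n-1}$. When $n \ge 5$, I first invoke Lemma \ref{lem:G-contains-a-cycle-implies-that-G-has-at-most-4-vertices}: if $G$ contained a cycle then $\alpha_1'(G)=2$ would force $n \le 4$, a contradiction, so $G$ is a tree. It then remains to show that every non-star tree on at least five vertices contains $K_{1,2}\cup K_2$.

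This last step is where the real work lies, though it is only a short case analysis on a longest path of $G$, whose endpoints are leaves. Since $G$ is not a star its diameter is at least $3$, so this path has at least four vertices $v_1 v_2 v_3 v_4$. If it has at least five vertices $v_1 \cdots v_5$, then $\{v_1v_2, v_2v_3, v_4v_5\}$ induces $K_{1,2}\cup K_2$ and $\alpha_1'(G)\ge 3$. Otherwise the path is exactly $v_1 v_2 v_3 v_4$ and $G$ is a double star with interior vertices $v_2, v_3$; as $n \ge 5$ there is a further vertex $w$, which a longest-path argument forces to be a leaf adjacent to $v_2$ or $v_3$, say $v_2$, so that $\{v_1v_2, v_2w, v_3v_4\}$ induces $K_{1,2}\cup K_2$ and again $\alpha_1'(G)\ge 3$. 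Either way we contradict $\alpha_1'(G)=2$, forcing the tree to have diameter $2$, i.e. to be $K_{1,n-1}$. The main obstacle is thus confined to verifying that no non-star tree escapes this configuration; the cyclic case is dispatched wholesale by the lemma, and the bounded cases are pure enumeration.
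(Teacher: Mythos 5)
Your proposal is correct, and it follows the paper's skeleton in two of its three parts: the lower bound is proved identically (two edges at a vertex of degree at least $2$), and the cyclic case is dispatched by the same Lemma~\ref{lem:G-contains-a-cycle-implies-that-G-has-at-most-4-vertices}, with the small orders $n\in\{3,4\}$ handled by the same enumeration. Where you genuinely diverge is in the tree case for $n\ge 5$, and in the organizing principle. You make explicit the forbidden-subgraph reformulation --- $\alpha_1'(G)\ge 3$ if and only if $G$ contains $K_{1,2}\cup K_2$ as an edge-induced subgraph --- which the paper uses only implicitly (it appears inside the proof of Lemma~\ref{Lem:GtoLG} as the statement $G[M_1(G)]\cong K_{1,2}\cup tK_2$, but is never isolated as a criterion). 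This makes your sufficiency direction essentially automatic: no graph on at most four vertices can contain a five-vertex configuration, and the star is excluded because all its edges share the centre. For necessity on trees, the paper instead proves the dichotomy $\Delta(G)=2$ or $\Delta(G)=n-1$ by taking a maximum-degree vertex with a neighbour of degree at least $2$, and then separately bounds the length of a path; you run a longest-path argument, splitting on whether the longest path has at least five vertices or exactly four (the double-star case). Both tree arguments are correct and of comparable length; the paper's degree dichotomy has the side benefit of directly motivating the open question it poses about graphs with $3\le\Delta(G)\le n-2$, whereas your formulation reduces the whole theorem to a single monotone containment condition, which is conceptually cleaner and would generalise more readily (e.g., to asking which graphs avoid $K_{1,2}\cup tK_2$). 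One small point of rigour: your claim that a tree whose longest path has exactly four vertices is a double star, and that any extra vertex must be a leaf hanging off $v_2$ or $v_3$, deserves the one-line justification that a neighbour of $v_1$ or $v_4$, or a vertex at distance two from the path, would yield a longer path; as written it is a sketch, but a standard and easily completed one, consistent with the level of detail in the paper itself.
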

	
	\begin{proof}
		Let $n\ge 3$ be an integer, and let $G$ be a connected graph of order $n$. Since $G$ is connected and $n\ge 3$, $\Delta(G)\ge 2$. Let $v$ be a vertex of maximum degree in $G$, and let $\{v_1, v_2\} \subseteq N_G(v)$. Thus, the edges $vv_i$ for all $i\in [2]$ form a $1$-nearly edge independent set in $G$, implying that
		\begin{align*}
			\alpha_1'(G) \ge |\{vv_1, vv_2\}| =2,
		\end{align*}
		thereby proving the lower bound.
		
		Suppose that $G$ is a connected graph of order $n\ge 3$ and $\alpha_1'(G) =2$. If $G$ contains a cycle, then, by Lemma \ref{lem:G-contains-a-cycle-implies-that-G-has-at-most-4-vertices}, we have $n\le 4$, implying that $G \in \{C_3, C_4, D_4, K_4, U_{1,3}\}$. Hence, we may assume that $G$ does not contain a cycle. If $G$ does not contain a cycle, then $G$ is a tree. If $n=3$, then $G \cong K_{1,2}$ and $\alpha_1'(G) =2$. Hence, we may assume that $n\ge 4$. 
		
		We show that either $\Delta(G)=2$ or $\Delta(G) =n-1$. Suppose, to the contrary, that $3 \le \Delta(G) \le n-2$. If $n=4$, then either $G \cong K_{1,3}$ in which case $\Delta(G)=3=n-1$ or $G \cong P_4$ in which case $\Delta(G)=2<3$. Thus, in both cases, we reach a contradiction. Hence we may assume that $n\ge 5$. Let $v$ be a vertex of maximum degree in $G$ and let $N_G(v) = \{v_1, v_2, \ldots, v_r\}$, where $3\le r = \Delta(G)\le n-2$. Since $G$ has order $n$ and $G$ is connected, there exists at least one neighbour of $v$ that has degree at least $2$ in $G$. Without any loss of generality, let $\deg_Gv_1\ge 2$ and let $v_1^*$ be a neighbour of $v_1$ such that $v_1^* \ne v$. Thus, the set $M_1(G) = \{vv_2, vv_3, v_1v_1^*\}$ forms a $1$-nearly edge independent set in $G$, implying that $\alpha_1'(G) \ge |M_1(G)| = 3>2$, a contradiction.
		
		Thus, either $\Delta(G) =2$ or $\Delta(G)=n-1$. If $\Delta(G)=n-1$, then, since $G$ is a tree, $G\cong K_{1,n-1}$. Hence, we may assume that $\Delta(G) = 2$. If $\Delta(G)=2$, then, since $G$ is a tree, $G$ is a path $P_n = v_1, v_2, \ldots, v_n$. We show that $n\le 4$. Suppose, to the contrary, that $n\ge 5$. Then, the set $M_1(G) = \{v_1v_2, v_2v_3, v_4v_5\}$ forms a $1$-nearly edge independent set in $G$, implying that $\alpha_1'(G) \ge |M_1(G)| = 3>2$, a contradiction, and so $n\le 4$. If $n=4$, then $G \cong P_4$, and if $n=3$, then $G \cong K_{1,2}$. This completes the proof of Theorem \ref{thm:lower-bound-for-connected-graphs}.
	\end{proof}

	\section{Open question}
	\label{sec:open-question}
	
	We have seen from the proof of Theorem \ref{thm:lower-bound-for-connected-graphs} that if $G$ is a graph of order $n$ with minimum $1$-nearly edge independence number, then either $\Delta(G) =2$ or $\Delta(G) = n-1$. It is, therefore, a natural question to find a lower bound on $\alpha_1'(G)$ if $G$ is a graph of order $n$ such that $3\le \Delta(G) \le n-2$. So, we conclude this paper by posing the following question.
	
	\begin{quest}
		Can we find a tight lower bound on $\alpha_1'(G)$ if $G$ is a graph of order $n$ and $3\le \Delta(G) \le n-2$? Can we characterise all the extremal graphs achieving equality on this bound?
	\end{quest}
	
	\newpage
	\bibliographystyle{abbrv} 
	\bibliography{references}

\end{document}